\def\Pb{\mathbb{P}}
\def\Eb{\mathbb{E}}
\def\Rb{\mathbb{R}}
\def\Nb{\mathbb{N}}
\def\Zb{\mathbb{Z}}
\def\Bb{\mathbb{B}}
\def\Tc{\mathcal{T}}
\def\Cc{\mathcal{C}}
\def\Ac{\mathcal{A}}
\def\Cc{\mathcal{C}}
\def\Hc{\mathcal{H}}
\def\Bc{\mathcal{B}}
\def\Pr{\mathbf{P}}
\def\Er{\mathbf{E}}
\def\Ts{\mathscr{T}}
\def\Pt{\mathit{P}}
\def\Et{\mathit{E}}
\def\th{^{\text{th}}}
\def\oT{{\overline{\Tc}}}
\def\oR{{\overline{\rho}}}
\def\|{\\ \smallskip}
\def\ind{\mathbf{1}}
\def\nin{n \rightarrow \infty}
\def\ed{\stackrel{\text{\tiny{d}}}{=}}
\def\qqquad{\qquad \quad}
\def\qqqquad{\qqquad \quad}
\newtheorem{thm}{Theorem}
\newtheorem{lem}{Lemma}[section]
\newtheorem{prp}[lem]{Proposition}
\newtheorem{cly}[lem]{Corollary}
\newtheorem{rem}[lem]{Remark}
\numberwithin{equation}{section}
\title{A quenched central limit theorem for biased random walks on supercritical Galton-Watson trees}
\author{Adam Bowditch, University of Warwick}
\date{}
\begin{document}             
\maketitle
\begin{abstract}
In this note, we prove a quenched functional central limit theorem for a biased random walk on a supercritical Galton-Watson tree with leaves. 
This extends a result of Peres and Zeitouni (2008) where the case without leaves is considered. 
A conjecture of Ben Arous and Fribergh (2016) suggests an upper bound on the bias which we observe to be sharp.
\end{abstract}

\let\thefootnote\relax\footnote{\textit{MSC2010 subject classifications:} Primary 60K37, 60F05, 60F17; secondary 60J80. \\ \textit{Keywords:} Random walk, random environment, Galton-Watson tree, quenched, functional central limit theorem, invariance principle.}

\section{Introduction}
We investigate biased random walks on supercritical Galton-Watson trees with leaves. A GW-tree conditioned to survive consists of an infinite backbone structure with finite trees attached as branches. The backbone structure is a GW-tree whose offspring law does not have deaths. The branches are formed by attaching a random number of independent GW-trees (conditioned to die out) to each vertex of the backbone. This forms dead-ends in the environment which makes it a natural setting for observing trapping as the walk is slowed by taking excursions in the branches of the tree. The number of trees attached to a given vertex on the backbone has a distribution depending on the backbone locally. This means that the branches are not i.i.d.\ and therefore the trapping incurred by the addition of the leaves demonstrates a significant complication to the model without leaves studied in \cite{peze}.

The influence of the bias on the trapping is an important feature of the model. As the bias is increased, the local drift away from the root will increase but this does not necessarily speed up the walk. This is because it increases the time trapped in the finite leaves from which the walk cannot escape without taking long sequences of movements against the bias. In \cite{lypepe} it is shown that, for a suitably large bias, the trapping is sufficient to slow the walk to zero speed whereas, for small bias, the expected trapping time is finite and the walk has a positive limiting speed. Under a further restriction on the bias the trapping times have finite variance; we use this to prove a quenched invariance principle for the walk.  

We next describe the supercritical GW-tree in greater detail. Let $\{p_k\}$ denote the offspring distribution of a GW-process $Z_n$ with a single progenitor, mean $\mu >1$ and probability generating function $f$. The process $Z_n$ gives rise to a random tree $\Tc^f$ where individuals are represented by vertices and edges connect individuals with their offspring. We denote by $\rho$ the root, which is the vertex representing the unique progenitor. Let $q$ denote the extinction probability of $Z_n$ which is non-zero only when $p_0>0$. In this case we then define 
\[g(s):=\frac{f(s)-f(qs)}{1-q} \qquad \text{ and } \qquad h(s):=\frac{f(qs)}{q}\]
which are generating functions of GW-processes. In particular, $g$ is the generating function of a GW-process without deaths and $h$ is the generating function of a subcritical GW-process. An $f$-GW-tree conditioned on nonextinction $\Tc$ can be generated by first generating a $g$-GW-tree $\Tc^g$ and then, to each vertex $x$ of $\Tc^g$, appending a random number $M_x$ of independent $h$-GW-trees. We refer to $\Tc^g$ as the backbone of $\Tc$ and the finite trees appended to $\Tc^g$ as the traps. 

We now introduce the biased random walk on a fixed tree. For $\Tc$ fixed with $x\in \Tc$ let $\overleftarrow{x}$ denote the parent of $x$ and $c(x)$ the set of children of $x$. A $\beta$-biased random walk on $\Tc$ is a random walk $(X_n)_{n \geq 0}$ on the vertices of $\Tc$ which is $\beta$-times more likely to make a transition to a given child of the current vertex than the parent (which are the only options). More specifically, the random walk started from $z \in \Tc$ is the Markov chain defined by $\Pt^\Tc_z(X_0=z)=1$ and the transition probabilities 
\[\Pt^\Tc_z(X_{n+1}=y|X_n=x)=\begin{cases} \frac{1}{1+\beta |c(x)|}, & \text{if } y=\overleftarrow{x}, \\  \frac{\beta}{1+\beta |c(x)|}, & \text{if } y \in c(x), \; x \neq \rho, \\ \frac{1}{
d_\rho}, & \text{if } y \in c(x), \; x =\rho, \\ 0, & \text{otherwise.} \\ \end{cases} \]  
We use $\Pb_\rho(\cdot):=\int \Pt^\Tc_\rho(\cdot)\Pr(\text{d}\Tc)$ for the annealed law obtained by averaging the quenched law $\Pt^\Tc_\rho$ over the law $\Pr$ on $f$-GW-trees conditioned to survive. Unless indicated otherwise, we start the walk at $\rho$.

For $x \in \Tc$, let $|x|:=d(\rho,x)$ denote the distance between $x$ and the root of the tree. It has been shown in \cite{lypepe} that if $\beta\in(\mu^{-1},f'(q)^{-1})$ then $|X_n|n^{-1}$ converges $\Pb$-a.s.\ to a deterministic constant $\nu>0$ called the speed of the walk. When $\beta<\mu^{-1}$ the walk is recurrent and $|X_n|n^{-1}$ converges $\Pb$-a.s.\ to $0$. When the bias is large the walk is transient but slowed by having to make long sequences of movements against the bias in order to escape the traps; in particular, if $\beta\geq f'(q)^{-1}$ then the slowing affect is strong enough to cause $|X_n|n^{-1}$ to converge $\Pb$-a.s.\ to $0$. This regime has been studied further by \cite{arfrgaha} and \cite{bowd} where polynomial scaling results are shown. 

For $\sigma,t>0$ and $n=1,2,...$ define
\[B_t^n:=\frac{|X_{\lfloor nt\rfloor}|-n \nu t}{\sigma\sqrt{n}}.\]
Our main result, Theorem \ref{t:SupQCLT}, is a quenched invariance principle for $B_t^n$.
\begin{thm}\label{t:SupQCLT}
 Suppose $p_0>0$, $\mu>1$, $\beta \in(\mu^{-1},f'(q)^{-1/2})$ and that there exists $\lambda>1$ such that
\begin{flalign}\label{e:expmom}
\sum_{k\geq 0}\lambda^kp_k<\infty.
\end{flalign}
There exists $\sigma>0$ such that, for $\Pr$-a.e.\ $\Tc$, we have that the process $(B_t^n)_{t\geq 0}$ converges in $\Pt^\Tc$-distribution on $D([0,\infty),\Rb)$ endowed with the Skorohod $J_1$ topology to a standard Brownian motion.
 \end{thm}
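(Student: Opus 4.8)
The plan is to decompose the trajectory at regeneration times into i.i.d.\ blocks, establish second-moment bounds on the blocks---this is where the hypothesis $\beta<f'(q)^{-1/2}$ enters, and where it should prove sharp---deduce the annealed invariance principle from Donsker's theorem, and finally promote it to the quenched statement by a variance (decoupling) argument in the spirit of \cite{peze}. Say that a time $n$ is a \emph{regeneration time} if $X_n$ lies on the backbone $\Tc^g$, if $|X_m|<|X_n|$ for every $m<n$, and if $|X_m|\ge|X_n|$ for every $m\ge n$, and let $0<\tau_1<\tau_2<\cdots$ enumerate the regeneration times, of which there are infinitely many because $\beta>\mu^{-1}$ makes the walk transient by \cite{lypepe}. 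With $\Delta\tau_k:=\tau_{k+1}-\tau_k$, $\Delta h_k:=|X_{\tau_{k+1}}|-|X_{\tau_k}|$ and $R_k:=\max_{\tau_k\le j<\tau_{k+1}}\big||X_j|-|X_{\tau_k}|\big|$, the strong Markov property together with the fact that the subtree of $\Tc$ rooted at a backbone vertex has a law not depending on that vertex shows that, under $\Pb_\rho$, the triples $(\Delta\tau_k,\Delta h_k,R_k)_{k\ge1}$ are i.i.d.\ and independent of the first block $[0,\tau_1)$. The non-i.i.d.\ structure of the traps emphasised in the introduction does not disturb this decomposition; it only makes the moment estimates below more delicate than in the leafless case of \cite{peze}.

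The real obstacle, I expect, is the sharp bound $\Eb[(\Delta\tau_1)^2]<\infty$, the companion bounds $\Eb[(\Delta h_1)^2]<\infty$ and $\Eb[R_1^2]<\infty$ being comparatively easy. The number of backbone vertices visited during a block, and the height gain $\Delta h_1$, have exponential moments under \eqref{e:expmom} by standard estimates for the regeneration structure, so the difficulty concentrates in the time spent inside traps. A trap is a finite $h$-GW-tree; being subcritical with mean offspring $h'(1)=f'(q)$, it has height $H$ with $\Pr(H\ge k)\le f'(q)^{k}$, while a gambler's-ruin estimate---using \eqref{e:expmom} to control the trap's total size, and noting that for $\beta>1$ the walk must repeatedly move against the bias to climb out, at a cost of order $\beta^{k}$---shows that a trap of height $k$ entered at its base is escaped in a time whose second moment is at most $k^{a}\beta^{2k}$ for some constant $a$. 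Hence the second moment of the time spent in a single trap is at most a constant times $\sum_{k\ge1}k^{a}(f'(q)\beta^{2})^{k}$, which is finite precisely when $f'(q)\beta^{2}<1$, i.e.\ when $\beta<f'(q)^{-1/2}$---exactly the threshold anticipated by \cite{arfrgaha} (see also \cite{bowd}). Summing these contributions over the exponentially integrable number of trap excursions made during a block yields $\Eb[(\Delta\tau_1)^2]<\infty$; the part I expect to cost genuine effort is carrying this out cleanly, keeping track of the branching \emph{inside} the traps and of the dependence between the trap counts and the backbone (the first block $[0,\tau_1)$ being handled in the same way).

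Granting the block bounds, the annealed statement should be routine. Recall from \cite{lypepe} that $|X_n|/n\to\nu$; by the law of large numbers for the blocks $\nu=\Eb[\Delta h_1]/\Eb[\Delta\tau_1]$, and we take $\sigma>0$ to be the constant determined by $\sigma^{2}=\Eb[\Delta\tau_1]^{-1}\,\mathrm{Var}(\Delta h_1-\nu\,\Delta\tau_1)$ (positive because $\Delta h_1-\nu\,\Delta\tau_1$ is non-degenerate). Donsker's functional theorem for the i.i.d.\ pairs $(\Delta\tau_k,\Delta h_k)$, a continuous time-change inverting the renewal clock $k\mapsto\tau_k$, and the estimate $\max_{k\le Cn}R_k=o(\sqrt n)$ coming from $\Eb[R_1^2]<\infty$ (which lets one replace $|X_{\lfloor nt\rfloor}|$ by its value at the last regeneration before time $\lfloor nt\rfloor$) together give $(B^n_t)_{t\ge0}\cd(W_t)_{t\ge0}$ under $\Pb_\rho$ for a standard Brownian motion $W$; this convergence already holds in $D([0,\infty),\Rb)$ with the $J_1$ topology, the limit being continuous and the jumps of $B^n$ having size $1/(\sigma\sqrt n)\to0$.

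To reach the quenched statement I would prove the bivariate annealed invariance principle: under $\widehat\Pb_\rho:=\int\Pt^\Tc_\rho\otimes\Pt^\Tc_\rho\,\Pr(\d\Tc)$, governing two independent $\beta$-biased walks $X,\widetilde X$ in a common environment $\Tc$, the pair $(B^n,\widetilde B^n)$ of their rescaled height processes converges to $(W,\widetilde W)$ with $W,\widetilde W$ \emph{independent} standard Brownian motions. One obtains this from joint regeneration times of $(X,\widetilde X)$ defined so as to additionally force the two walks into \emph{disjoint} subtrees above their current positions: after an almost surely finite initial phase the two walks separate onto distinct backbone rays once and for all, so their increments between consecutive joint regenerations are i.i.d.\ and measurable with respect to disjoint---hence independent---portions of $\Tc$, and Donsker applies (the joint regeneration times inheriting finite second moments from the same trap estimates). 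The bivariate principle then gives, for every bounded Lipschitz $F$ on $D([0,\infty),\Rb)$, that $\mathrm{Var}_{\Pr}(\Et^\Tc[F(B^n)])=\widehat\Eb_\rho[F(B^n)F(\widetilde B^n)]-(\Eb_\rho[F(B^n)])^{2}\to0$, whence $\Et^\Tc[F(B^n)]\to\Eb[F(W)]$ in $\Pr$-probability; applying this to a countable convergence-determining family of functionals, passing to a sparse subsequence $n_j$ with $n_{j+1}/n_j\to1$ along which the convergence is $\Pr$-almost sure, diagonalising over the family, and then sandwiching the remaining indices between consecutive $n_j$---the mechanism of \cite{peze}---yields the $\Pr$-almost sure convergence in $\Pt^\Tc$-distribution on $D([0,\infty),\Rb)$ asserted in Theorem~\ref{t:SupQCLT}. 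That the centering may be taken to be the deterministic $n\nu t$, rather than the quenched mean $\Et^\Tc[|X_{\lfloor nt\rfloor}|]$, follows from $\Et^\Tc[|X_n|]=n\nu+o(\sqrt n)$ for $\Pr$-a.e.\ $\Tc$, itself a Borel--Cantelli consequence of the same second-moment control of the blocks.
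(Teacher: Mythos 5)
Your proposal follows essentially the same route as the paper: a regeneration decomposition into i.i.d.\ blocks whose time increments have finite second moment precisely when $f'(q)\beta^2<1$ (the paper's Lemma~\ref{l:subComp} and Proposition~\ref{p:SupSecMom}), Donsker plus a time change and control of the within-block overshoot for the annealed principle, and a two-independent-walks decoupling along geometric subsequences $\lfloor b^k\rfloor$, $b\in(1,2)$, in the manner of Bolthausen--Sznitman and Peres--Zeitouni for the quenched upgrade. The one point where the routes differ slightly is the key trap estimate: rather than conditioning on the trap height and using a gambler's-ruin bound (where conditioning on $H\ge k$ inflates the generation sizes and needs some care), the paper bounds $\Et^{\oT^h}_{\oR}[(\tau^+_{\oR})^2]$ unconditionally via occupation times, the bound $\Et[v_xv_y]\le C(|c(x)|\beta+1)(|c(y)|\beta+1)\beta^{|x|+|y|}$, and the correlation estimate $\Er[Z^h_kZ^h_j]\le Cf'(q)^j$ for $j\ge k$, which produces the same series $\sum_k(f'(q)\beta^2)^k$ directly.
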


The condition $p_0>0$ is so that the tree has leaves; the case without leaves, $p_0=0$, is considered in \cite{peze}. This no longer requires the condition $\beta<f'(q)^{-1/2}$ which is due to the trapping in the dead-ends caused by the leaves. Indeed, when $p_0=0$ we have that $q=0$ and $f'(0)=0$ therefore the condition becomes irrelevant. This regime is studied further in \cite{peze} to the case where $\beta=\mu^{-1}$ and $p_0=0$; in this setting $\nu=0$ and it is shown that $B_t^n$ converges in distribution to the absolute value of a Brownian motion. This result is extended in \cite{desu} to random walks on multi-type GW-trees with leaves. Although the dead-ends in this model trap the walk, the bias is small and therefore the slowing is weak. 

By choosing $p_0>0$ we allow the tree to have leaves; this creates traps in the environment which slow the walk. A key input for this work is the second moment estimate for trapping times in trees determined in \cite{bowdpre}. From this we infer that $\beta<f'(q)^{-1/2}$ is the correct upper bound on the bias so that the trapping times in our model have finite variance. We conclude, in Remark \ref{r:BetaMu}, that this upper bound is necessary which confirms \cite[Conjecture 3.1]{arfr}.

We assume that the exponential moments condition (\ref{e:expmom}) holds throughout. This is a purely technical assumption which we expect could be relaxed to a sufficiently large moment condition however the main focus of this note has been to obtain the optimal upper bound on the bias.

In \cite{bowdpre}, the second moment condition for trapping times in finite trees is used to prove an annealed invariance principle and quenched central limit theorem for a biased random walk on a subcritical GW-tree conditioned to survive. In that model the backbone is one-dimensional and the fluctuations are significantly influenced by the specific instance of the environment. This results in an environment dependent centring for the walk in the quenched central limit theorem. In the supercritical case the walk will randomly choose one of infinitely many escape routes; this creates a mixing of the environment which yields a deterministic centring in the quenched result Theorem \ref{t:SupQCLT}.

We begin, in Section \ref{s:SAnn}, by proving an annealed functional central limit theorem for the walk by adapting the renewal argument used in \cite[Theorem 4.1]{sz}. This is then extended to the quenched result Theorem \ref{t:SupQCLT} in Section \ref{s:SQue} by applying the argument used in \cite{bosz} which largely involves showing that multiple copies of the walk see sufficiently different areas of the tree.

\section{An annealed invariance principle}\label{s:SAnn}
We begin this section by showing that the time spent in a branch has finite variance. Let $\oT^h$ be the tree formed by attaching an additional vertex $\oR$ (as the parent of the root $\rho$) to an $h$-GW-tree $\Tc^h$. For a fixed tree $T$ and vertex $x \in T$ let $\tau_x^+:=\inf\{k>0:X_k=x\}$ denote the first return time to $x$. Let $\xi^f,\xi^g,\xi^h$ be random variables with probability generating functions $f,g$ and $h$ respectively then let $\xi$ be equal in distribution to the number of vertices in the first generation of $\Tc$. Since the generation sizes of $\Tc^g$ are dominated by those of $\Tc$ we have that $\xi^g$ is stochastically dominated by $\xi$. Using Bayes' law we have that $\Pr(\xi=k)= p_k(1-q^k)(1-q)^{-1} \leq cp_k$ therefore both $\xi$ and $\xi^g$ inherit the exponential moments of $\xi^f$. Furthermore $\Pr(\xi^h=k)= p_kq^k$ therefore $\xi^h$ automatically has exponential moments. 

\begin{lem}\label{l:subComp}
 Suppose that $p_0>0$, $\mu>1$ and $\beta \in(\mu^{-1},f'(q)^{-1/2})$, then we have that \[\Er\left[\Et^{\oT^h}_{\oR}\left[\left(\tau^+_{\oR}\right)^2\right]\right] <\infty.\]
 \begin{proof}
  We can write
  \[\tau^+_{\oR}=\sum_{x \in \oT^h}v_x \qquad \text{where} \qquad v_x=\sum_{k=1}^{\tau^+_{\oR}}\ind_{\{X_k=x\}}\]
  is the number of visits to $x$ before returning to $\oR$. Recall that $c(x)$ denotes the set of children of $x$. It then follows that
  \begin{flalign}
   \Er\left[\Et^{\oT^h}_{\oR}\left[\left(\tau^+_{\oR}\right)^2\right]\right] & = \Er\left[\sum_{x,y \in \oT^h}\Et^{\oT^h}_{\oR}[v_xv_y]\right] \notag \\
   & \leq C_\beta \Er\left[\sum_{x,y \in \oT^h}(|c(x)|\beta+1)(|c(y)|\beta+1)\beta^{|x|+|y|}\right] \notag \\
   & = C_\beta \Er\left[\sum_{x \in \oT^h}(|c(x)|\beta+1)\beta^{|x|}\sum_{y \in \oT^h}(|c(y)|\beta+1)\beta^{|y|}\right] \label{e:EtErTh}
  \end{flalign}
where the inequality follows from \cite[Lemma 4.8]{bowdpre}. Letting $Z_k^h$ denote the size of the $k\th$ generation of $\oT^h$ and collecting terms in each generation we have that 
\[\sum_{x \in \oT^h}(|c(x)|\beta+1)\beta^{|x|} \; = \; 1+\sum_{k\geq 1}Z_k^h(\beta^k+\beta^{k-1}) \; \leq \; (1+\beta^{-1})\sum_{k\geq 0}Z_k^h\beta^k.\]
By \cite[Lemma 4.1]{bowdpre}, since $\xi^h$ has exponential moments, we have that $\Er[Z_k^hZ_j^h]\leq Cf'(q)^j$ whenever $j\geq k$. Substituting this and the above inequality into (\ref{e:EtErTh}) we have that 
\begin{flalign*}
 \Er\left[\Et^{\oT^h}_{\oR}\left[\left(\tau^+_{\oR}\right)^2\right]\right] & \leq C_\beta \sum_{k\geq 0}\beta^k\sum_{j\geq k}\Er[Z_k^hZ_j^h]\beta^j \\
 & \leq C_\beta  \sum_{k\geq 0}\beta^k\sum_{j\geq k}(f'(q)\beta)^j \\
 & \leq C_{\beta,f'(q)}  \sum_{k\geq 0}(f'(q)\beta^2)^k 
\end{flalign*}
which is finite by the assumption that $\beta<f'(q)^{-1/2}$.
 \end{proof}
\end{lem}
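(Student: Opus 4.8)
The plan is to expand the return time as a sum of local times and reduce everything to moment estimates for the generation sizes of a subcritical Galton--Watson tree. Writing $v_x=\sum_{k=1}^{\tau^+_{\oR}}\ind_{\{X_k=x\}}$ for the number of visits to $x$ before the walk returns to $\oR$, one has $\tau^+_{\oR}=\sum_{x\in\oT^h}v_x$, and hence
\[
\Et^{\oT^h}_{\oR}\!\left[(\tau^+_{\oR})^2\right]=\sum_{x,y\in\oT^h}\Et^{\oT^h}_{\oR}[v_xv_y].
\]
The first step is to control the joint local-time moments $\Et^{\oT^h}_{\oR}[v_xv_y]$ on a \emph{fixed} finite tree. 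For a $\beta$-biased walk this is a standard reversibility/electrical-network computation: conditionally on the visited structure each $v_x$ is geometric with a parameter governed by the effective conductance from $x$ to $\oR$, and applying the strong Markov property at $x$ and $y$ factorises the product up to the tree geometry between $\oR$, $x$ and $y$. The quantitative bound I would use is \cite[Lemma 4.8]{bowdpre}, namely $\Et^{\oT^h}_{\oR}[v_xv_y]\le C_\beta(|c(x)|\beta+1)(|c(y)|\beta+1)\beta^{|x|+|y|}$; the essential feature is the exponential decay $\beta^{|x|+|y|}$ in the depths of $x$ and $y$.

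The second step is to average over the random tree. Collecting vertices by generation and writing $Z_k^h$ for the size of the $k$-th generation of $\oT^h$, the double sum is dominated (after absorbing the conductance prefactors $(|c(\cdot)|\beta+1)$ into a harmless one-generation shift) by a constant times $\sum_{k,j\geq0}\beta^{k+j}\Er[Z_k^hZ_j^h]$. Here I use that $h$ generates a subcritical process of mean $f'(q)<1$, and that $\xi^h$ has exponential moments: indeed $\Pr(\xi^h=k)=p_kq^k$ with $0<q<1$ (which is where $p_0>0$ and $\mu>1$ enter, guaranteeing $q\in(0,1)$ so that $g,h$ are genuine generating functions), so \eqref{e:expmom} transfers to $\xi^h$. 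This lets me invoke \cite[Lemma 4.1]{bowdpre} to obtain $\Er[Z_k^hZ_j^h]\le Cf'(q)^j$ for $j\geq k$. Substituting this in, summing the geometric series in $j$ first and then in $k$, leaves $\sum_{k\geq0}(f'(q)\beta^2)^k$, which is finite precisely because $\beta<f'(q)^{-1/2}$. (Only the upper endpoint of the interval $(\mu^{-1},f'(q)^{-1/2})$ is used; $\beta>\mu^{-1}$ is inherited from the transience regime.)

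The main obstacle, and the crux of why the threshold is $f'(q)^{-1/2}$ rather than $f'(q)^{-1}$, is the input estimate on $\Et^{\oT^h}_{\oR}[v_xv_y]$ with the correct dependence on the offspring numbers and on the depths of \emph{both} $x$ and $y$; this is exactly the second-moment analysis of trap excursions carried out in \cite{bowdpre}. Granted that estimate, the rest is routine: interchanging sum and expectation is justified by nonnegativity, and the only care needed is to keep track of the single-generation shifts produced by the conductance prefactors so that the resulting series is still dominated by $\sum_k(f'(q)\beta^2)^k$.
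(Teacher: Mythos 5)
Your proposal is correct and follows the paper's argument essentially verbatim: the same local-time decomposition of $\tau^+_{\oR}$, the same appeal to \cite[Lemma 4.8]{bowdpre} for the bound $\Et^{\oT^h}_{\oR}[v_xv_y]\leq C_\beta(|c(x)|\beta+1)(|c(y)|\beta+1)\beta^{|x|+|y|}$, and the same use of \cite[Lemma 4.1]{bowdpre} to reduce to the geometric series $\sum_k(f'(q)\beta^2)^k$. Your "one-generation shift" remark corresponds exactly to the paper's explicit inequality $1+\sum_{k\geq1}Z_k^h(\beta^k+\beta^{k-1})\leq(1+\beta^{-1})\sum_{k\geq0}Z_k^h\beta^k$.
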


Let $r(0):=0$, $r(n):=\inf\{k> r(n-1):X_k,X_{k-1} \in \Tc^g\}$ for $n\geq 1$ and $Y_n:=X_{r(n)}$, then $Y_n$ is a $\beta$-biased random walk on $\Tc^g$ coupled to $X_n$. Write $\zeta^Y_0:=0$ and for $m=1,2,...$ let 
\[\zeta^Y_m:=\inf\{k>\zeta^Y_{m-1}:|Y_j|<|Y_k|\leq |Y_l|  \text{ for all } j<k\leq l\}  \]
be regeneration times for the backbone walk. We can then define $\zeta^X_k:=\inf\{m\geq 0: X_m=Y_{\zeta^Y_k}\}$ to be the corresponding regeneration times for $X$. 
By \cite[Proposition 3.4]{lypepe} we have that there exists, $\Pr$-a.s., an infinite sequence of regeneration times $\{\zeta_k^X\}_{k\geq 1}$ and the sequence 
\[\left\{\left(\zeta^X_{k+1}-\zeta^X_k\right),\left(\left|X_{\zeta^X_{k+1}}\right|-\left|X_{\zeta^X_{k}}\right|\right)\right\}_{k\geq1}\]
is i.i.d.\ (as is the corresponding sequence for $Y$). Furthermore, letting $m_t:=\sup\{j\geq 0:\zeta_j^X \leq t\}$ be the number of regenerations by time $t$, we have that $m_t$ is non-decreasing and diverges $\Pb$-a.s.
 
By \cite[Theorems 3.1 \& 4.1]{lypepe}, whenever $\mu>1$ and $\mu^{-1}<\beta<f'(q)^{-1}$ we have that there exists $\nu\in(0,1)$ such that $|X_n|n^{-1}$ converges $\Pb$-a.s.\ to $\nu$. Moreover, combined with \cite[Corollary 3.5]{lypepe}, we have that the time and distance between regenerations of $X$ both have finite means with respect to $\Pb$. Let
\[\chi_j\;:=\;X_{\zeta_j^X}-X_{\zeta_{j-1}^X}-\nu(\zeta_j^X-\zeta_{j-1}^X)\;=\;Y_{\zeta_j^Y}-Y_{\zeta_{j-1}^Y}-\nu(\zeta_j^X-\zeta_{j-1}^X).\]
By the previous remark we have that $\chi_j$ are i.i.d.\ with respect to $\Pb$. By the strong law of large numbers and the definition of $\nu$ we have that $\chi_j$ are centred (see \cite[Theorems 3.1 \& 4.1]{lypepe}). We will show that $\chi_j$ have finite second moment and that their sum
\[\Sigma_m\;:=\;\sum_{j=2}^m\chi_j\;=\;\left(X_{\zeta_m^X}-\nu\zeta_m^X\right)-\left(X_{\zeta_1^X}-\nu\zeta_1^X\right)\]
can be used to approximate $B_t^n$. 

By the remark preceding Lemma \ref{l:subComp}, the offspring distribution $\xi^g$ of $\Tc^g$ has exponential moments. Since $Y$ is a random walk on $\Tc^g$, by \cite[Proposition 3]{peze} we have that $\Eb[(\zeta^Y_2-\zeta^Y_1)^k]<\infty$ for all $k \in \Zb$ whenever $\beta>\mu^{-1}$. 

Let $\eta_k:=r(k+1)-r(k)$ denote the total time taken between $X$ making the $k\th$ and $(k+1)\th$ transition along the backbone. This time consists of
\[N_k:=\sum_{j=r(k)+1}^{r(k+1)}\ind_{\{X_j=Y_k\}}\]
excursions into the finite trees appended to the backbone at this vertex and one additional step to the next backbone vertex. Therefore, we can write
\begin{flalign}\label{e:etaK}\eta_k:=1+\sum_{j=1}^{N_k}\gamma_{k,j} \qquad \text{ where } \qquad \gamma_{k,j}:=\sum_{i=r(k)}^{r(k+1)}\ind_{\{\sum_{l=r(k)}^{i}\ind_{\{X_l=Y_k\}}=j\}}\end{flalign}
is the duration of the $j\th$ such excursion.  

\begin{prp}\label{p:SupSecMom}
Under the assumptions of Theorem \ref{t:SupQCLT} we have that \[\Eb[(\zeta^X_2-\zeta^X_1)^2]<\infty.\]
 \begin{proof}
The law of $\zeta^X_2-\zeta^X_1$ under $\Pb$ is identical to its law under $\Pb_\rho(\cdot|\zeta^Y_1=1)$. That is, by the independence structure, we can condition the first regeneration vertex to be the first vertex reached by $Y$ without changing the law of $\zeta^X_2-\zeta^X_1$. We therefore have that $\Eb\left[(\zeta^X_2-\zeta^X_1)^2\right] $ can be written as
\begin{flalign*}
\Eb\left[(\zeta^X_2-\zeta^X_1)^2\big|\zeta^Y_1=1\right] 
\; = \; \Eb\left[\left(\sum_{k=1}^{\zeta^Y_2-\zeta^Y_1}\eta_k\right)^2\Big{|}\zeta^Y_1=1\right] 
\; \leq \; \Eb\left[(\zeta^Y_2-\zeta^Y_1)\sum_{k=1}^{\zeta^Y_2-\zeta^Y_1}\eta_k^2\Big{|}\zeta^Y_1=1\right] 
\end{flalign*}
by convexity. Using convexity again with the decomposition (\ref{e:etaK}) we can write this as 
\begin{flalign*}
  \Eb\left[(\zeta^Y_2-\zeta^Y_1)\sum_{k=1}^{\zeta^Y_2-\zeta^Y_1} \!\left(1+\sum_{j=1}^{N_k}\gamma_{k,j}\right)^2\!\Big{|}\zeta^Y_1=1\right] 
  \leq \Eb\left[(\zeta^Y_2-\zeta^Y_1)\sum_{k=1}^{\zeta^Y_2-\zeta^Y_1}(N_k+1)\left(1+\sum_{j=1}^{N_k}\gamma_{k,j}^2\right) \Big{|}\zeta^Y_1=1\right]. 
\end{flalign*}

By conditioning on the backbone, buds and the walk on the backbone and buds we have that the individual excursion times are independent of the regeneration times of $Y$ and the number of excursions. The excursion times are also distributed as the first return time to $\oR$ for a walk started from $\oR$ on $\oT^h$. We therefore have that the above expectation can be bounded above by 
\begin{flalign*}
 \Er\left[\Et^{\oT^h}_{\oR}\left[(\tau^+_{\oR})^2\right]\right] \Eb\left[(\zeta^Y_2-\zeta^Y_1)\sum_{k=1}^{\zeta^Y_2-\zeta^Y_1}(N_k+1)^2 \Big{|}\zeta^Y_1=1\right].
\end{flalign*}
Where, by Lemma \ref{l:subComp}, we have that $\Er\left[\Et^{\oT^h}_{\oR}\left[(\tau^+_{\oR})^2\right]\right] <\infty$.

Let $(z_j)_{j=0}^\infty$ denote the ordered distinct vertices visited by $Y$ and 
\[L(z,j):=\sum_{i=0}^j \ind_{\{Y_j=z\}}, \quad L(z):=L(z,\infty)\]
the local times of the vertex $z$. Write 
\[W_{z,l}:=\sum_{j=0}^\infty \ind_{\left\{X_j=z,\; X_{j+1} \notin \Tc^g, \; L(z,j)=l\right\}}\]
to be the number of excursions from $z$ (by $X$) on the $l^{th}$ visit to $z$ (by $Y$) for $l=1,...,L(z)$ and $M:=|\{Y_k\}_{k=1}^{\zeta^Y_2-1}|$ the number of distinct vertices visited by $Y$ between time $1$ and time $\zeta^Y_2-1$ then 
\begin{flalign}
& \Eb\left[(\zeta^Y_2-\zeta^Y_1)\sum_{k=1}^{\zeta^Y_2-\zeta^Y_1}(N_k+1)^2 \Big{|}\zeta^Y_1=1\right]  \notag \\
& \qqquad  = \Eb\left[(\zeta^Y_2-\zeta^Y_1)\sum_{k=1}^{M}\sum_{l=1}^{L(z_k)}(W_{z_k,l}+1)^2 \Big{|}\zeta^Y_1=1\right] \notag \\
 & \qqquad = \sum_{k=1}^{\infty}\sum_{l=1}^{\infty}\Eb\left[(\zeta^Y_2-\zeta^Y_1)\ind_{\{k\leq M, \; l\leq L(z_k)\}}(W_{z_k,l}+1)^2 |\zeta^Y_1=1\right] \notag \\
 & \qqquad \leq \sum_{k=1}^{\infty}\sum_{l=1}^{\infty}\Big(\Eb\left[(\zeta^Y_2-\zeta^Y_1)^2\ind_{\{k\leq M, \; l\leq L(z_k)\}}|\zeta^Y_1=1\right]\Eb\left[(W_{z_k,l}+1)^4 |\zeta^Y_1=1\right]\Big)^{1/2} \label{e:CSBnd}
\end{flalign}
by Cauchy-Schwarz. Conditional on $\zeta^Y_1=1$, for all $1\leq k\leq M$ we have that $L(z_k)\leq \zeta^Y_2-\zeta^Y_1$; moreover, $M\leq \zeta^Y_2-\zeta^Y_1$ therefore \[\ind_{\{k\leq M, \; l\leq L(z_k)\}}\leq\ind_{\{k,l\leq \zeta^Y_2-\zeta^Y_1\}}.\] 

Since the root does not have a parent, without any further information concerning the number of children from a given vertex, we have that the walk is more likely to take an excursion into one of the neighbouring traps when at the root than from this vertex. We can, therefore, stochastically dominate the number of excursions from a vertex by the number of excursions from the root to see that $\Eb\left[(W_{z_k,l}+1)^4 \right]\leq \Eb\left[(W_{z_0,1}+1)^4 \right]$. Using this, Cauchy-Schwarz and that $\Pb(\zeta^Y_1=1)>0$, the expression (\ref{e:CSBnd}) can be bounded above by
\begin{flalign*}
  & \Pb(\zeta^Y_1=1)^{-1}\sum_{k=1}^{\infty}\sum_{l=1}^{\infty}\left(\Eb\left[(\zeta^Y_2-\zeta^Y_1)^2\ind_{\{k,l\leq \zeta^Y_2-\zeta^Y_1\}}\right]\Eb\left[(W_{z_k,l}+1)^4 \right]\right)^{1/2} \\
  & \qqqquad \qqqquad \leq C\Eb\left[(\zeta^Y_2-\zeta^Y_1)^4\right]^{1/4}\Eb\left[(W_{z_0,1}+1)^4 \right]^{1/2}\sum_{k=1}^{\infty}\sum_{l=1}^{\infty}\Pb\left(k,l\leq \zeta^Y_2-\zeta^Y_1\right)^{1/4}.
\end{flalign*}
Since the offspring distribution $\xi^g$ has exponential moments we have that the time between regenerations has finite fourth moments by \cite[Proposition 3]{peze}. That is, $\Eb\left[(\zeta^Y_2-\zeta^Y_1)^4\right]<\infty$.

Write $Z_n$ and $Z^g_n$ to be the GW-processes associated with $\Tc$ and $\Tc^g$. The number of excursions from the root is geometrically distributed with termination probability $1-p_{ex}$ where 
\[p_{ex}:=\frac{Z_1-Z_1^g}{Z_1}.\]
Using properties of geometric random variables we therefore have that  
\[\Eb\left[(W_{z_0,1}+1)^4 \right] \;\leq \; C\Eb[(1-p_{ex})^{-4}] \;\leq \; C\Eb[Z_1^4]  \;< \;\infty\]
since $Z_1\ed \xi$ which has exponential moments. 

It remains to show that 
\begin{flalign}\label{e:dubsum}
\sum_{k=1}^{\infty}\sum_{l=1}^{\infty}\Pb\left(k,l\leq \zeta^Y_2-\zeta^Y_1\right)^{1/4}
\end{flalign}
is finite. Note that $\Pb\left(k,l\leq \zeta^Y_2-\zeta^Y_1\right)=\Pb\left(\zeta^Y_2-\zeta^Y_1\geq l\right)$ whenever $l\geq k$. Using Chebyshev's inequality we can then bound (\ref{e:dubsum}) above by
\begin{flalign*}
 2\sum_{k=1}^{\infty}\sum_{l=k}^{\infty}\Pb\left(\zeta^Y_2-\zeta^Y_1\geq l\right)^{1/4} \leq 2\sum_{k=1}^{\infty}\sum_{l=k}^{\infty} \left(\frac{\Eb\left[\left(\zeta^Y_2-\zeta^Y_1\right)^j\right]}{l^j}\right)^{1/4}
\end{flalign*}
for any integer $j$. In particular, by \cite[Proposition 3]{peze} we have that $\Eb\left[\left(\zeta^Y_2-\zeta^Y_1\right)^j\right]$ is finite for any integer $j$. Choosing $j>8$ we then have that this sum is finite which completes the proof.
\end{proof}
\end{prp}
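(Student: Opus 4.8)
The plan is to break a single regeneration block of $X$ into its constituent backbone steps and to bound each step's contribution by combining the finite variance of one trap excursion (Lemma~\ref{l:subComp}) with the fact that a backbone regeneration block is ``short'', i.e.\ has finite moments of every order. First I would use the independence structure of the regenerations to replace $\Eb[(\zeta^X_2-\zeta^X_1)^2]$ by $\Eb[(\zeta^X_2-\zeta^X_1)^2\mid\zeta^Y_1=1]$, so that the block decomposes as $\zeta^X_2-\zeta^X_1=\sum_{k=1}^{\zeta^Y_2-\zeta^Y_1}\eta_k$ with $\eta_k$ given by (\ref{e:etaK}). Two applications of the power-mean (Cauchy--Schwarz) inequality then give
\[
\Big(\sum_{k=1}^{\zeta^Y_2-\zeta^Y_1}\eta_k\Big)^2\le(\zeta^Y_2-\zeta^Y_1)\sum_{k=1}^{\zeta^Y_2-\zeta^Y_1}\eta_k^2,\qquad \eta_k^2\le(N_k+1)\Big(1+\sum_{j=1}^{N_k}\gamma_{k,j}^2\Big).
\]

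The second step is to peel off the excursion durations. Conditioning on the backbone, the traps attached to it, and the trajectory restricted to each of these, the $\gamma_{k,j}$ become independent of both $\zeta^Y_2-\zeta^Y_1$ and the excursion counts $N_k$, and each is distributed as $\tau^+_{\oR}$ for the walk on $\oT^h$ started from $\oR$. Taking the conditional expectation therefore factorises the sum, and Lemma~\ref{l:subComp} (which gives $\Er[\Et^{\oT^h}_{\oR}[(\tau^+_{\oR})^2]]<\infty$) reduces the whole problem to showing
\[
\Eb\Big[(\zeta^Y_2-\zeta^Y_1)\sum_{k=1}^{\zeta^Y_2-\zeta^Y_1}(N_k+1)^2\,\Big|\,\zeta^Y_1=1\Big]<\infty.
\]

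For this remaining quantity I would re-index the backbone steps by the distinct vertices $z_0,z_1,\dots$ visited by $Y$ together with the local time of the visit, turning $\sum_k(N_k+1)^2$ into $\sum_{k=1}^{M}\sum_{l=1}^{L(z_k)}(W_{z_k,l}+1)^2$. The gain is that, conditional on $\zeta^Y_1=1$, both $M$ and every local time $L(z_k)$ are at most $\zeta^Y_2-\zeta^Y_1$, so $\ind_{\{k\le M,\,l\le L(z_k)\}}\le\ind_{\{k,l\le\zeta^Y_2-\zeta^Y_1\}}$. A Cauchy--Schwarz split then separates a factor $\Eb[(\zeta^Y_2-\zeta^Y_1)^2\ind_{\{k,l\le\zeta^Y_2-\zeta^Y_1\}}]$ from $\Eb[(W_{z_k,l}+1)^4]$, and the latter is bounded \emph{uniformly} in $(k,l)$: because the root has no parent, the number of excursions from any backbone vertex is stochastically dominated by the number from the root, which is geometric with success probability $p_{ex}=(Z_1-Z_1^g)/Z_1$, whence $\Eb[(W_{z_k,l}+1)^4]\le C\Eb[Z_1^4]<\infty$ by the exponential-moment hypothesis (\ref{e:expmom}). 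Bounding the surviving factor by $\Eb[(\zeta^Y_2-\zeta^Y_1)^4]^{1/2}\Pb(k,l\le\zeta^Y_2-\zeta^Y_1)^{1/2}$ and using that $Y$ is a walk on the death-free tree $\Tc^g$, whose offspring law has exponential moments so that $\zeta^Y_2-\zeta^Y_1$ has finite moments of all orders by \cite[Proposition 3]{peze}, a Chebyshev estimate with exponent $j>8$ makes the double sum $\sum_{k,l}\Pb(k,l\le\zeta^Y_2-\zeta^Y_1)^{1/4}$ converge.

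I expect the final step to be the genuine obstacle: there is no direct control on how fast $\sum_k(N_k+1)^2$ can grow along a block, and the only handle is that the length $\zeta^Y_2-\zeta^Y_1$ of a backbone block simultaneously bounds the number of visited vertices and each per-vertex local time, which is exactly what forces the passage to the vertex/local-time indexing and then a quantitative tail bound $\Pb(\zeta^Y_2-\zeta^Y_1\ge l)\lesssim l^{-j}$ with $j$ large. A secondary point to be handled with care rather than waved away is that the traps are not i.i.d.\ (the number attached at a backbone vertex depends on the local backbone structure); this is precisely why the excursion-count bound must be phrased as a stochastic domination by the root vertex rather than an exact distributional identity.
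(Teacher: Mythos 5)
Your proposal follows essentially the same route as the paper's proof: conditioning on $\zeta^Y_1=1$, two convexity steps to reduce to $\Er[\Et^{\oT^h}_{\oR}[(\tau^+_{\oR})^2]]$ times the weighted sum of $(N_k+1)^2$, the re-indexing by distinct vertices and local times with the bound $\ind_{\{k\leq M,\,l\leq L(z_k)\}}\leq\ind_{\{k,l\leq\zeta^Y_2-\zeta^Y_1\}}$, the stochastic domination of excursion counts by the geometric count at the root, and the Chebyshev tail bound with $j>8$. The argument is correct and matches the paper in all essentials.
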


For $x \in \Tc$ let $\Tc_x$ denote the subtree consisting of all descendants of $x$ in $\Tc$. Then, for $y \in \Tc^g$, let $\Tc_y^{-}$ be the branch at $y$; that is, the subtree rooted at $y$ consisting only of $y$, the children of $y$ not on $\Tc^g$ and their descendants. The tree $\Tc_y^{-}$ then has the law of a tree rooted at $y$ with some random number $M_y^-$ of $h$-GW-trees attached to $y$. Since $M_y^-$ is dominated by $\xi$, by (\ref{e:expmom}) we have that $M_y^-$ has exponential moments. It therefore follows from \cite[Theorem B]{lypepe} that there exists some constant $C$ such that
\begin{flalign}\label{e:hgtbrn}
\Pr(\Hc(\Tc_y^-)\geq n)\leq Cf'(q)^{n}
\end{flalign}
where, for a fixed rooted tree $\Ts$, $\Hc(\Ts):=\sup\{d(\rho,x):x \in \Ts\}$ is the height of $\Ts$. Let $\Hc_n:=\max\{\Hc(\Tc_y^{-}):y \in \{Y_k\}_{k=0}^n\}$ denote the largest branch seen by $Y$ by time $n$. It follows that
\begin{flalign}\label{e:BSig}
\sup_{t \in [0,T]}\left|B_t^n-\frac{\Sigma_{m_{tn}}}{\sigma\sqrt{n}}\right|\leq 
\frac{|X_{\zeta_1^X}|+\nu\zeta_1^X+\Hc_{nT}}{\sigma\sqrt{n}}+\sup_{j=1,...,m_{nT}}\frac{|Y_{\zeta^Y_{j+1}}|-|Y_{\zeta^Y_j}|+\nu(\zeta^X_{j+1}-\zeta^X_j)}{\sigma\sqrt{n}}.
\end{flalign}

Up to time $nT$, the walk $Y$ can have visited at most $nT$ vertices on $\Tc^g$ therefore the probability that $X$ has visited a branch of height at least $C\log(n)$ by time $nT$ is at most $C_Tnf'(q)^{C\log(n)}$. In particular, by Borel-Cantelli, choosing $C$ suitably large we have that there are almost surely only finitely many $n$ such that $Y$ has visited the root of a branch of height at least $C\log(n)$ by time $nT$. Since $X_{\zeta_1^X}$ and $\zeta_1^X$ do not depend on $n$ and have finite mean, we have that the first term in (\ref{e:BSig}) converges $\Pb$-a.s.\ to $0$.

By \cite[Proposition 3]{peze}, for any $k \in \Zb^+$ we have that $\Eb[(|Y_{\zeta^Y_1}|-|Y_{\zeta^Y_1}|)^k]<\infty$ therefore the distance between regeneration points is small. In particular, bounding $m_{nT}$ above by $nT$, using a union bound and Markov's inequality we have that for any $\varepsilon>0$,
\[\Pb\left(\sup_{j=1,...,m_{nT}}\frac{|Y_{\zeta^Y_{j+1}}|-|Y_{\zeta^Y_j}|}{\sigma\sqrt{n}}>\varepsilon\right)\leq C_{T,\varepsilon}\Eb\left[\left(|Y_{\zeta^Y_2}|-|Y_{\zeta^Y_1}|\right)^2\ind_{\left\{|Y_{\zeta^Y_2}|-|Y_{\zeta^Y_1}|>\varepsilon\sqrt{n}\right\}}\right]\]
which converges to $0$ as $\nin$ by dominated convergence. Similarly, using Proposition \ref{p:SupSecMom}, we have that the same holds for the supremum of $\zeta^X_{j+1}-\zeta^X_j$; therefore, we have that
\[\Pb\left(\sup_{t \in [0,T]}\left|B_t^n-\frac{\Sigma_{m_{tn}}}{\sigma\sqrt{n}}\right|>\varepsilon\right)\]
converges to $0$ as $\nin$.

By the law of large numbers and that $\zeta^X_1/n$ converges $\Pb$-a.s.\ to $0$ we have that
\[\zeta^X_n=\zeta^X_1+\sum_{k=2}^n(\zeta^X_k-\zeta^X_{k-1})\]
converges $\Pb$-a.s. It therefore follows by continuity of the inverse at strictly increasing functions, \cite[Corollary 13.6.4]{wh}, that $m_{nt}/n$ converges $\Pb$-a.s.\ to a deterministic linear process. 

By Proposition \ref{p:SupSecMom} and the remark leading to it we have that $\Sigma_m$ is the sum of i.i.d.\ centred random variables with finite second moment. By Donsker's theorem we therefore have that $(\Sigma_{nt}/\sqrt{n})_{t\geq 0}$ converges to a scaled Brownian motion. By continuity of composition at continuous limits, \cite[Theorem 13.2.1]{wh}, and the previous remarks we therefore have the following annealed central limit theorem.
\begin{cly}\label{c:AnnSupFCLT}
Under the assumptions of Theorem \ref{t:SupQCLT}, there exists a constant $\sigma^2>0$ such that the process \[B_t^n:=\frac{|X_{\lfloor nt\rfloor}|-n \nu t}{\sigma\sqrt{n}}\] converges in $\Pb$-distribution on $D([0,\infty),\Rb)$ endowed with the Skorohod $J_1$ topology to a standard Brownian motion.
\end{cly}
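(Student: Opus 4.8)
The plan is to carry out the standard regeneration/renewal argument of \cite[Theorem~4.1]{sz}, approximating $B^n$ by the partial-sum process built from the i.i.d.\ regeneration blocks and then invoking Donsker's theorem together with a random time change; essentially all the ingredients have been assembled above, so the proof amounts to combining them. Recall from the discussion preceding the statement that the increments $\chi_j=(Y_{\zeta^Y_j}-Y_{\zeta^Y_{j-1}})-\nu(\zeta^X_j-\zeta^X_{j-1})$ are i.i.d.\ and centred under $\Pb$, and that by Proposition~\ref{p:SupSecMom} (together with the finiteness of all moments of $\zeta^Y_2-\zeta^Y_1$ from \cite[Proposition~3]{peze}) they have finite second moment. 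Setting $\sigma^2:=\Eb[\chi_2^2]/\Eb[\zeta^X_2-\zeta^X_1]\in(0,\infty)$, Donsker's theorem gives that $(\Sigma_{\lfloor nt\rfloor}/\sqrt n)_{t\ge0}$ converges in $D([0,\infty),\Rb)$ to a Brownian motion with variance parameter $\Eb[\chi_2^2]$.

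Next I would pass from the ``block clock'' to the real clock. By the strong law of large numbers $\zeta^X_n/n\to\Eb[\zeta^X_2-\zeta^X_1]$ $\Pb$-a.s.\ (using $\zeta^X_1/n\to0$), so by continuity of the inverse at strictly increasing limits \cite[Corollary~13.6.4]{wh} the counting process satisfies $m_{nt}/n\to t/\Eb[\zeta^X_2-\zeta^X_1]$ $\Pb$-a.s., uniformly on compacts. Composing the Donsker limit with this deterministic linear time change via continuity of composition at continuous limits \cite[Theorem~13.2.1]{wh} then shows that $(\Sigma_{m_{nt}}/(\sigma\sqrt n))_{t\ge0}$ converges in $\Pb$-distribution on $D([0,\infty),\Rb)$, in the Skorohod $J_1$ topology, to a standard Brownian motion.

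It remains to control the approximation error, i.e.\ to show that the right-hand side of (\ref{e:BSig}) tends to $0$ in $\Pb$-probability for each $T$. For the first term: $|X_{\zeta^X_1}|$ and $\zeta^X_1$ are fixed, integrable random variables, hence $o(\sqrt n)$, while the tallest branch $\Hc_{nT}$ seen by time $nT$ is $O(\log n)$ $\Pb$-a.s., since (\ref{e:hgtbrn}) and a union bound over the at most $nT$ backbone vertices visited give $\Pb(\Hc_{nT}\ge C\log n)\le C_T\,n\,f'(q)^{C\log n}$, which is summable for $C$ large, so Borel--Cantelli applies. For the second term, bounding $m_{nT}$ by $nT$ and applying a union bound and Markov's inequality reduces the estimate to $T\varepsilon^{-2}\Eb[(|Y_{\zeta^Y_2}|-|Y_{\zeta^Y_1}|)^2\ind_{\{|Y_{\zeta^Y_2}|-|Y_{\zeta^Y_1}|>\varepsilon\sqrt n\}}]$ and the analogous expression with $\nu(\zeta^X_2-\zeta^X_1)$ in place of $|Y_{\zeta^Y_2}|-|Y_{\zeta^Y_1}|$, each of which tends to $0$ by dominated convergence thanks to the finite second moments of the regeneration distance (\cite[Proposition~3]{peze}) and of the regeneration time (Proposition~\ref{p:SupSecMom}). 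Since the limit process is a.s.\ continuous, convergence on each compact interval upgrades to convergence in $D([0,\infty),\Rb)$ with the $J_1$ topology, which finishes the proof.

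The real obstacle in this chain of arguments is Proposition~\ref{p:SupSecMom}, equivalently Lemma~\ref{l:subComp}, whose proof rests on the trap-time second-moment estimate \cite[Lemma~4.8]{bowdpre} and is precisely where the hypothesis $\beta<f'(q)^{-1/2}$ enters; once that second moment is in hand, the remaining steps are the routine Donsker/time-change/negligible-error bookkeeping sketched above. The only mildly delicate point is that the number of regeneration blocks up to time $nT$ grows linearly in $n$ while the normalisation is only $\sqrt n$, which is why the within-block fluctuations must be treated with the truncated-moment form of Markov's inequality rather than a crude first-moment bound.
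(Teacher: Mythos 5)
Your proposal is correct and follows essentially the same route as the paper: the approximation of $B^n$ by $\Sigma_{m_{nt}}/(\sigma\sqrt{n})$ via the bound (\ref{e:BSig}) (Borel--Cantelli for the branch heights, union bound plus truncated Markov for the regeneration increments), Donsker's theorem for the i.i.d.\ centred blocks with finite second moment from Proposition \ref{p:SupSecMom}, and the time change via \cite[Corollary 13.6.4]{wh} and \cite[Theorem 13.2.1]{wh}. The explicit identification $\sigma^2=\Eb[\chi_2^2]/\Eb[\zeta^X_2-\zeta^X_1]$ is a harmless addition consistent with the paper's unspecified constant.
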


\begin{rem}\label{r:BetaMu}
The branch of a subcritical GW-tree conditioned to survive can be constructed by attaching a random number of subcritical GW-trees to a root vertex. In \cite[Lemma 4.12]{bowdpre} it is shown that, conditional on having a single vertex in the first generation of the branch, the second moment of the first return time to the root is infinite whenever $\beta^2\tilde{\mu}\geq 1$ where $\tilde{\mu}$ is the mean of the subcritical GW-law. It therefore follows from this that 
\begin{flalign*}
\Er\left[\Et^{\oT^h}_{\oR}\left[(\tau^+_{\oR})^2\right]\right] =\infty
\end{flalign*}
whenever $\beta^2f'(q)\geq1$ and $\mu>1$. In particular, if we have that $\beta^2f'(q)\geq 1$ then $\chi_j$ have infinite second moments since Proposition \ref{p:SupSecMom} fails. In this case, we do not have a central limit theorem for $\Sigma_m$ from which it follows that $B_t^n$ does not converge in distribution. This shows that the condition $\beta^2f'(q)<1$ is necessary for the annealed central limit theorem. We note here that when $p_0=0$ we have that $q=0=f'(q)$ and, therefore, this condition is necessarily satisfied.
\end{rem}

\section{A quenched invariance principle}\label{s:SQue}
We now extend Corollary \ref{c:AnnSupFCLT} to a quenched functional central limit theorem. For each $n \in \Nb$ write $\Bb_t^n(X)$ to be the linear interpolation satisfying 
\[\Bb_{k/n}^n(X)=\frac{|X_k|-k\nu}{\sigma\sqrt{n}}\] 
for $k \in \Nb$. We then have that $B_t^n=\Bb_t^n$ for $t>0$ such that $nt \in \Nb$ and $|B_t^n-\Bb_t^n|\leq n^{-1/2}(\nu+1)/\sigma$ therefore it suffices to consider the interpolation. To begin, we prove the following lemma which is the analogue of \cite[Lemma 4.1]{bosz} and follows by the same method. 

\begin{lem}\label{l:BoSz}
 Suppose that the assumptions of Theorem \ref{t:SupQCLT} hold and that for any bounded Lipschitz function $F:C([0,T],\Rb)\rightarrow \Rb$ and $b\in(1,2)$ we have that 
 \begin{flalign}\label{e:BoSz}
 \sum_{k\geq 1}\mathrm{Var}_\Pr\left(\Et^\Tc\left[F\left(\Bb^{\lfloor b^k\rfloor}\right)\right]\right)<\infty.
\end{flalign}
Then, for $\Pr$-a.e.\ $\Tc$, the process $(B_t^n)_{t\geq 0}$ converges in $\Pt^\Tc$-distribution on $D([0,\infty),\Rb)$ endowed with the Skorohod $J_1$ topology to a standard Brownian motion.
 \begin{proof}
Suppose that for any bounded Lipschitz function $F:C([0,T],\Rb)\rightarrow \Rb$ and $b\in(1,2)$ we have that $\Pr$-a.s.\ 
\begin{flalign}\label{e:QFconv}
\Et^\Tc[F(\Bb^{\lfloor b^k\rfloor})] \rightarrow \Et[F(B)]
\end{flalign}
where $B$ is a standard Brownian motion. For any $\delta,T>0$, the function $F_{T,\delta}(\omega):=\sup\{|\omega(s)-\omega(t)|\land 1:s,t\leq T,\; |t-s|\leq \delta\}$ is bounded and Lipschitz; furthermore, for $\Pr$-a.e.\ $\Tc$ 
\begin{flalign}\label{e:QFT}
\lim_{\delta \rightarrow 0}\limsup_{k\rightarrow \infty} \Et^\Tc[F_{T,\delta}(\Bb^{\lfloor b^k\rfloor})]=0
\end{flalign}
since, by properties of Brownian motion, $\Et[F_{T,\delta}(B)] \rightarrow 0$ as $\delta \rightarrow 0$. In particular, by Markov's inequality we then have that for any $\varepsilon>0$
\begin{flalign*}
\lim_{\delta\rightarrow 0}\limsup_{k\rightarrow \infty}\Pt^\Tc\left(\sup_{\substack{s,t\leq T: \\|s-t|\leq \delta  }}|\Bb_s^{\lfloor b^k\rfloor}-\Bb_t^{\lfloor b^k\rfloor}|>\varepsilon\right) \; \leq \; \lim_{\delta\rightarrow 0}\limsup_{k\rightarrow \infty} \varepsilon^{-1}\Et^\Tc\left[F_{T,\delta}(\Bb^{\lfloor b^k\rfloor})\right] \;= \; 0
\end{flalign*}
which gives tightness of $(\Bb^{\lfloor b^k\rfloor}_\cdot)_{k=1}^\infty$ under $\Pt^\Tc$.

For $n \in \Nb$ let $k_n$ denote the unique integer such that $\lfloor b^{k_n} \rfloor\leq n <\lfloor b^{k_n+1}\rfloor$ then, by Markov's inequality and the definition of the interpolation, we have that for any $\varepsilon\in(0,1)$
\begin{flalign*}
 \lim_{\delta\rightarrow 0}\limsup_{k\rightarrow \infty}\Pt^\Tc\left(\sup_{\substack{s,t\leq T: \\|s-t|\leq \delta  }}|\Bb_s^{n}-\Bb_t^{n}|>\varepsilon\right)  
 & \leq \lim_{\delta\rightarrow 0}\limsup_{n\rightarrow \infty} \Et^\Tc\left[\sup_{\substack{s,t\leq T: \\|s-t|\leq \delta  }}|\Bb_s^n-\Bb_t^n|\land 1\right] \varepsilon^{-1} \\
 &  \leq \lim_{\delta\rightarrow 0}\limsup_{n\rightarrow \infty} \Et^\Tc\left[\sup_{\substack{s,t\leq T: \\|s-t|\leq \delta  }}\left|\Bb_{ s\frac{n}{\lfloor b^{k_{\scriptscriptstyle{n}}}\rfloor}}^{\lfloor b^{k_n}\rfloor}-\Bb_{ t\frac{n}{\lfloor b^{k_n}\rfloor}}^{\lfloor b^{k_n}\rfloor}\right|\land 1\right]\varepsilon^{-1} \\
 &  \leq \lim_{\delta\rightarrow 0}\limsup_{k\rightarrow \infty} \Et^\Tc\left[\sup_{\substack{u,v\leq 2T: \\|u-v|\leq 2\delta  }}|\Bb_{u}^{\lfloor b^k\rfloor}-\Bb_{v}^{\lfloor b^k\rfloor}|\land 1\right] \varepsilon^{-1}
\end{flalign*}
since $b<2$ implies that $|s\frac{n}{\lfloor b^{k_n}\rfloor}-t\frac{n}{\lfloor b^{k_n}\rfloor}|<2\delta$ whenever $|s-t|<\delta$. In particular, the above expression is equal to $0$ by (\ref{e:QFT}) therefore the laws of $\Bb^n_\cdot$  are tight under $\Pt^\Tc$.

Let $F$ be bounded and Lipschitz; without loss of generality we may assume $||F||_\infty,||F||_{Lip}\leq 1$. For $n \in \Nb$ we have that 
\begin{flalign*}
 & \limsup_{\nin} |\Et^\Tc[F(\Bb^n)]-\Et^\Tc[F(\Bb^{\lfloor b^{k_n} \rfloor})]| \\
 & \qqquad \leq ||F||_{Lip}\limsup_{n\rightarrow \infty}\Et^\Tc\left[\sup_{s \leq T}|\Bb_s^n-\Bb_s^{\lfloor b^{k_n}\rfloor}|\land 1\right] \\
 & \qqquad \leq \limsup_{n\rightarrow \infty}\Et^\Tc\left[\sup_{s \leq T}\left|\sqrt{\frac{\lfloor b^{k_n}\rfloor}{n}}\Bb_{s\frac{n}{\lfloor b^{k_n}\rfloor}}^{\lfloor b^{k_n}\rfloor}-\Bb_s^{\lfloor b^{k_n}\rfloor}\right|\land 1\right] \\
 & \qqquad \leq \limsup_{n\rightarrow \infty}\Et^\Tc\left[\sup_{s \leq T}\left|\left(\sqrt{\frac{\lfloor b^{k_n}\rfloor}{n}}-1\right)\Bb_{s\frac{n}{\lfloor b^{k_n}\rfloor}}^{\lfloor b^{k_n}\rfloor}\right|\land 1\right]
 +\Et^\Tc\left[\sup_{\substack{s,t\leq T: \\|s-t| \leq T(b-1)}}\left|\Bb_{s}^{\lfloor b^{k_n}\rfloor}-\Bb_t^{\lfloor b^{k_n}\rfloor}\right|\land 1\right] \\
 & \qqquad \leq |1-b^{-1/2}|\Et\left[\sup_{s\leq bT}|B_s|\right] + \limsup_{k\rightarrow \infty}\Et^\Tc\left[F_{T,T(b-1)}(\Bb^{\lfloor b^k\rfloor})\right]
\end{flalign*}
which converges to $0$ as $b\rightarrow 1$. In particular, when (\ref{e:QFconv}) holds for any $F:C([0,T],\Rb)\rightarrow \Rb$ with $||F||_\infty,||F||_{Lip} \leq 1$ and $b\in(1,2)$ $\Pr$-a.s.\ we have that 
\begin{flalign}\label{e:ETE}
\Et^\Tc[F(\Bb^n)] \rightarrow \Et[F(B)].
\end{flalign}
Bounded Lipschitz functions are separable and dense in the space of continuous bounded functions therefore we have that (\ref{e:ETE}) holds $\Pr$-a.s.\ $\forall F \in C_b(C([0,T],\Rb))$ which completes the quenched functional CLT. It therefore remains to show that (\ref{e:BoSz}) implies (\ref{e:QFconv}).

By Corollary \ref{c:AnnSupFCLT} we have that for any bounded Lipschitz function, 
\[\Er\left[\Et^\Tc\left[F(\Bb^{\lfloor b^k\rfloor})\right]\right]=\Eb\left[F(\Bb^{\lfloor b^k\rfloor})\right]\rightarrow \Et[F(B)]\] 
as $k\rightarrow \infty$ therefore, it suffices to show that for $\Pr$-a.e.\ tree $\Tc$ we have that $|\Er[\Et^\Tc[F(\Bb^{\lfloor b^k\rfloor})]]-\Et^\Tc[F(\Bb^{\lfloor b^k\rfloor})]|$ converges to $0$. By Chebyshev's inequality, for $\varepsilon>0$, 
\[\Pr\left(|\Er[\Et^\Tc[F(\Bb^{\lfloor b^k\rfloor})]]-\Et^\Tc[F(\Bb^{\lfloor b^k\rfloor})]|>\varepsilon\right)\leq \varepsilon^{-2}\mathrm{Var}_\Pr\left(\Et^\Tc\left[F\left(\Bb^{\lfloor b^k\rfloor}\right)\right]\right). \]
The result follows from Borel-Cantelli and (\ref{e:BoSz}).
 \end{proof}
\end{lem}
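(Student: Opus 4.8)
The plan is to derive the quenched FCLT from the annealed one, Corollary~\ref{c:AnnSupFCLT}, together with the hypothesis~(\ref{e:BoSz}), following the route of~\cite{bosz}. Since $|B_t^n-\Bb_t^n|\le n^{-1/2}(\nu+1)/\sigma$ uniformly in $t$, it suffices to treat the linear interpolation $\Bb^n$; and since weak convergence on $C([0,T],\Rb)$ is determined by a countable separating family of bounded Lipschitz functionals $F$ --- which, upon letting $T\to\infty$, upgrades to convergence in $D([0,\infty),\Rb)$ under $J_1$ because the limit is continuous --- the statement reduces to showing that, for each such $F$ and each rational $b\in(1,2)$, we have $\Pr$-a.s.\ that $\Et^\Tc[F(\Bb^n)]\to\Et[F(B)]$, with $B$ a standard Brownian motion.

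First I would establish convergence along the geometric subsequence $n=\lfloor b^k\rfloor$. Corollary~\ref{c:AnnSupFCLT} gives $\Er[\Et^\Tc[F(\Bb^{\lfloor b^k\rfloor})]]=\Eb[F(\Bb^{\lfloor b^k\rfloor})]\to\Et[F(B)]$, so it is enough to control the fluctuation of $\Et^\Tc[F(\Bb^{\lfloor b^k\rfloor})]$ about its $\Pr$-mean. Chebyshev's inequality bounds $\Pr(|\Et^\Tc[F(\Bb^{\lfloor b^k\rfloor})]-\Er[\Et^\Tc[F(\Bb^{\lfloor b^k\rfloor})]]|>\varepsilon)$ by $\varepsilon^{-2}\,\mathrm{Var}_\Pr(\Et^\Tc[F(\Bb^{\lfloor b^k\rfloor})])$, and~(\ref{e:BoSz}) makes the sum over $k$ finite; Borel--Cantelli then gives the $\Pr$-a.s.\ convergence along this subsequence for each fixed $F$ and $b$, hence simultaneously over a countable dense collection.

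Next I would pass from the subsequence $\lfloor b^k\rfloor$ to all $n$ and then let $b\to1$. Writing $k_n$ for the index with $\lfloor b^{k_n}\rfloor\le n<\lfloor b^{k_n+1}\rfloor$, the interpolation $\Bb^n$ differs from $\Bb^{\lfloor b^{k_n}\rfloor}$ by a spatial factor $\sqrt{\lfloor b^{k_n}\rfloor/n}\in(b^{-1/2},1]$ and a temporal factor $n/\lfloor b^{k_n}\rfloor\in[1,b)$, both close to $1$ when $b$ is. To exploit this I need a quenched modulus-of-continuity bound: running the Borel--Cantelli argument above on the bounded Lipschitz modulus functional $F_{T,\delta}(\omega):=\sup\{|\omega(s)-\omega(t)|\land1:s,t\le T,\,|s-t|\le\delta\}$, and using $\Et[F_{T,\delta}(B)]\to0$, yields $\Pr$-a.s.\ that $\lim_{\delta\to0}\limsup_k\Et^\Tc[F_{T,\delta}(\Bb^{\lfloor b^k\rfloor})]=0$. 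Because $b<2$, a $\delta$-modulus bound on $\Bb^{\lfloor b^k\rfloor}$ becomes a $2\delta$-modulus bound on $\Bb^n$ after the time change; this both gives quenched tightness of $(\Bb^n)_n$ under $\Pt^\Tc$ and lets me bound $\limsup_n|\Et^\Tc[F(\Bb^n)]-\Et^\Tc[F(\Bb^{\lfloor b^{k_n}\rfloor})]|$ by $|1-b^{-1/2}|\,\Et[\sup_{s\le bT}|B_s|]+\limsup_k\Et^\Tc[F_{T,T(b-1)}(\Bb^{\lfloor b^k\rfloor})]$. Letting $b\to1$ kills the right-hand side, so $\Et^\Tc[F(\Bb^n)]\to\Et[F(B)]$ $\Pr$-a.s.\ for every bounded Lipschitz $F$, and then for every $F\in C_b(C([0,T],\Rb))$ by density.

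The delicate part is this last step: the rescaling comparison has to be made uniform in $n$, and one must check that the $b\to1$ limit genuinely closes the argument without a circular appeal to the quenched convergence being proved --- the restriction $b\in(1,2)$ is precisely what keeps the time-rescaling factor $n/\lfloor b^{k_n}\rfloor$ below $2$, so that the $\delta$-modulus control available along the subsequence suffices for $\Bb^n$. The first two steps are routine once~(\ref{e:BoSz}) is granted; the substantive content of the quenched theorem lies in verifying~(\ref{e:BoSz}) itself, which I would address separately by showing that two independent copies of the walk on the same tree separate quickly and so see essentially independent portions of the environment.
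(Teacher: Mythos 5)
Your proposal is correct and follows essentially the same route as the paper: Chebyshev plus Borel--Cantelli with hypothesis~(\ref{e:BoSz}) to get $\Pr$-a.s.\ convergence of $\Et^\Tc[F(\Bb^{\lfloor b^k\rfloor})]$ along the geometric subsequence, the modulus functional $F_{T,\delta}$ to obtain quenched tightness and to control the interpolation between $\lfloor b^{k_n}\rfloor$ and $n$ (using $b<2$), the bound $|1-b^{-1/2}|\,\Et[\sup_{s\le bT}|B_s|]+\limsup_k\Et^\Tc[F_{T,T(b-1)}(\Bb^{\lfloor b^k\rfloor})]$ followed by $b\to1$, and finally density of bounded Lipschitz functions. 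The only difference is the order of presentation (the paper first derives the full-sequence convergence from the subsequential one and only then verifies that~(\ref{e:BoSz}) yields the subsequential convergence), which is immaterial.
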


We now complete the proof of the quenched functional CLT by following the method used in \cite{peze} to show that condition (\ref{e:BoSz}) holds for any bounded Lipschitz function $F:C([0,T],\Rb)\rightarrow \Rb$ and $b\in(1,2)$ under the assumptions of the theorem.

\begin{proof}[Proof of Theorem \ref{t:SupQCLT}] 
For a fixed tree $\Tc$, let $X^1,X^2$ be independent $\beta$-biased walks on $\Tc$ and $Y^1,Y^2$ the corresponding backbone walks. For $i=1,2, \; k \in \Nb$ and $t,s\geq 0$ let 
\[\Bb_{t,s}^{k,i}=\Bb^{\lfloor b^k\rfloor}_t(X^i_{\cdot+s})-\Bb^{\lfloor b^k\rfloor}_t(X^i_s)\]
be a random variable with law of the interpolation $\Bb^{\lfloor b^k\rfloor}$ started from the vertex $X_s^i$. Define 
\[\vartheta_k^{Y^i}:=\min\{m>\lfloor b^{k/4}\rfloor: m\in\{\zeta_j^{Y^i}\}_{j\geq 1}\} \qquad \text{and} \qquad \vartheta_k^{X^i}=\min\left\{m\geq 0:X_m^i=Y^i_{\vartheta_k^{Y^i}}\right\}\]
to be the first regeneration time of $Y^i$ after time $\lfloor b^{k/4}\rfloor$ and the corresponding time for $X^i$. 

Let 
\begin{flalign*}
\Ac_k^1 & :=\left\{\{Y^1_s: \; s\leq \vartheta_k^{Y^1}\} \cap \{Y^2_{\vartheta_k^{Y^2}}\}=\phi\right\}=\left\{\{X^1_s: \;s\leq \vartheta_k^{X^1}\} \cap \{X^2_{\vartheta_k^{X^2}}\}=\phi\right\},\\
\Ac_k^2 & :=\left\{\{Y^2_s: \;s\leq \vartheta_k^{Y^2}\} \cap \{Y^1_{\vartheta_k^{Y^1}}\}=\phi\right\}=\left\{\{X^2_s: \;s\leq \vartheta_k^{X^2}\} \cap \{X^1_{\vartheta_k^{X^1}}\}=\phi\right\}
\end{flalign*}
and $\Ac_k:=\Ac_k^1\cap \Ac_k^2$ be the event that, after the first regeneration times after time $\lfloor b^{k/4}\rfloor$, the paths of $Y^1,Y^2$ do not intersect. Write $\Bc^{k,i}:=\{\vartheta_k^{Y^i}\leq b^{k/3}\}$ to be the event that the first regeneration after time $b^{k/4}$ happens before time $b^{k/3}$.

Recall that for $x \in \Tc^g$ we denote by $\Hc(\Tc^-_x)$ the height of the branch attached to the vertex $x$. Using Lipschitz properties of $\Bb^{k,i}$ we have that
\begin{flalign*}
 \sup_{t\leq T}\left|\Bb^{k,i}_{t,0}-\Bb^{k,i}_{t,\vartheta_k^{X^i}}\right| & \leq \sup_{m\leq Tb^k}b^{-k/2}\left||X^i_m|-m\nu-|X^i_{m+\vartheta_k^{X^i}}|+(m+\vartheta_k^{X^i})\nu+|X^i_{\vartheta_k^{X^i}}|-\vartheta_k^{X^i}\nu\right| \\
 & =\sup_{m\leq Tb^k}b^{-k/2}\left||X^i_m|-|X^i_{m+\vartheta_k^{X^i}}|+|X^i_{\vartheta_k^{X^i}}|\right| \\
 & \leq b^{-k/2}\max_{m\leq Tb^k}\left|\left||Y^i_m|-|Y^i_{m+\vartheta_k^{Y^i}}|\right|+|Y^i_{\vartheta_k^{Y^i}}|\right| + b^{-k/2}\Hc^i_{Tb^k}
\end{flalign*}
where $\Hc^i_{Tb^k}$ is the height of the tallest branch seen by time $Tb^k$ by $Y^i$. By time $Tb^k$ the walk $Y^i$ can visit at most $Tb^k+1$ unique vertices. At the first hitting time of a vertex, the bud and backbone distribution from this vertex are independent of the past; therefore, by (\ref{e:hgtbrn})
\begin{flalign}
 \Pb\left(\Hc^i_{Tb^k}\geq C\log(b^k)\right) 
 \;\leq\; C_Tb^k \Pr(\Hc(\Tc_\rho^-)\geq C\log(b^k))  
 \;\leq\; C_Tb^kf'(q)^{C\log(b^k)} 
 \;\leq\; C_Tb^{-k} \label{e:shTr}
\end{flalign}
for $C$ sufficiently large. Furthermore, by the Lipschitz property of $Y^i$ we have that
\begin{flalign*}
 b^{-k/2}\max_{m\leq Tb^k}\left|\left||Y^i_m|-|Y^i_{m+\vartheta_k^{Y^i}}|\right|+|Y^i_{\vartheta_k^{Y^i}}|\right| & \leq 2\vartheta_k^{Y^i}b^{-k/2}
\end{flalign*}
which is bounded above by $2b^{-k/6}$ on the event $\Bc^{k,i}$. Letting $\Cc^{k,i}:=\{\Hc^i_{Tb^k}< C\log(b^k)\}$, we then have that, on the event $\Bc^{k,i} \cap \Cc^{k,i}$,
\[\left|F\left(\Bb^{k,i}_{\cdot,0}\right)-F\left(\Bb^{k,i}_{\cdot,\vartheta_k^{X^i}}\right)\right|\leq Cb^{-k/6}\]
for any bounded Lipschitz function $F:C([0,T],\Rb)\rightarrow \Rb$.

Using the Lipschitz and boundedness properties of $F$, we then have that
\begin{flalign*}
 & \mathrm{Var}_\Pr\left(\Et^\Tc\left[F\left(\Bb^{\lfloor b^k\rfloor}\right)\right]\right) \\
 &  = \Er\left[\Et^\Tc\left[F(\Bb^{\lfloor b^k\rfloor})\right]^2\right]-\Er\left[\Et^\Tc\left[F(\Bb^{\lfloor b^k\rfloor})\right]\right]^2 \\
 &  = \Eb\left[F(\Bb^{k,1})F(\Bb^{k,2})\right]-\Eb\left[F(\Bb^{k,1})\right]\Eb\left[F(\Bb^{k,2})\right] \\
 &  \leq C\left(\Pb\left((\Bc^{k,1})^c\right)\!+\Pb\left((\Cc^{k,1})^c\right)\!+b^{-k/6}\right)
 \!+\Eb\left[F\!\left(\Bb^{k,1}_{\cdot,\vartheta_k^{X^1}}\!\right)F\!\left(\Bb^{k,2}_{\cdot,\vartheta_k^{X^2}}\!\right)\right]
 \!-\Eb\left[F\!\left(\Bb^{k,1}_{\cdot,\vartheta_k^{X^1}}\!\right)\right]\Eb\left[F\!\left(\Bb^{k,2}_{\cdot,\vartheta_k^{X^2}}\!\right)\right].
\end{flalign*}

On the event $\Ac_k$ we have that $\Bb^{k,1}_{\cdot,\vartheta_k^{X^1}}, \; \Bb^{k,2}_{\cdot,\vartheta_k^{X^2}}$ are independent therefore \[\Eb\left[F\left(\Bb^{k,1}_{\cdot,\vartheta_k^{X^1}}\right)F\left(\Bb^{k,2}_{\cdot,\vartheta_k^{X^2}}\right)|\Ac_k\right]-\Eb\left[F\left(\Bb^{k,1}_{\cdot,\vartheta_k^{X^1}}\right)|\Ac_k\right]\Eb\left[F\left(\Bb^{k,2}_{\cdot,\vartheta_k^{X^2}}\right)|\Ac_k\right]=0.\]
Using the Lipschitz property of $F$ we then have that 
\begin{flalign*}
 \mathrm{Var}_\Pr\left(\Et^\Tc\left[F\left(\Bb^{\lfloor b^k\rfloor}\right)\right]\right) \leq C\left(\Pb\left((\Ac^{k,1})^c\right)+\Pb\left((\Bc^{k,1})^c\right)+\Pb\left((\Cc^{k,1})^c\right)+b^{-k/6}\right).
\end{flalign*}

For $i=1,2$ we have that $Y^i$ are biased random walks on a supercritical GW-tree without leaves $\Tc^g$, whose offspring law has exponential moments. It follows that the estimates $\Pb((\Ac^{k,1})^c),\Pb((\Bc^{k,1})^c)\leq b^{-\tilde{c}k}$ given in the proof of \cite[Theorem 3]{peze} still hold. Combining these with (\ref{e:shTr}) we have that there exists $c>0$ such that for $k$ sufficiently large 
\[\mathrm{Var}_\Pr\left(\Et^\Tc\left[F\left(\Bb^{\lfloor b^k\rfloor}\right)\right]\right) \leq Cb^{-ck}\]
which shows (\ref{e:BoSz}) and therefore the result follows from Lemma \ref{l:BoSz}.

 \end{proof}

\section*{Acknowledgements}
I would like to thank my supervisor David Croydon for suggesting the problem, his support and many useful discussions. This work is supported by EPSRC as part of the MASDOC DTC at the University of Warwick. Grant No.\ EP/H023364/1.

\end{document}